\newtheorem{theorem}{Theorem}
\newtheorem{lemma}{Lemma}
\newtheorem{remark}{Remark}
\newtheorem{proposition}{Proposition}
\newcommand{\E}{\mathcal{E}}
\newcommand{\V}{\mathcal{V}}
\newcommand{\G}{\mathcal{G}}
\newcommand{\Z}{\mathcal{Z}}
\newcommand{\phil}[1]{\textcolor{cyan}{#1}}
\title{Optimal Ordering Policies for Multi-Echelon Supply Networks}
\author{Jos\'e I. Caiza, Ian Walter, Jitesh H. Panchal, Junjie Qin, and Philip E. Par\'e*\thanks{*Jos\'e I. Caiza, Ian Walter, Junjie Qin, and Philip E.~Par\'e are with the Elmore Family School of Electrical and Computer Engineering at Purdue University. Jitesh H. Panchal is with the School of Mechanical Engineering at Purdue University. Emails:~jcaiza@purdue.edu, walteri@purdue.edu, panchal@purdue.edu, jq@purdue.edu, philpare@purdue.edu. This work was supported in part 
   by the National Science Foundation, grant
   NSF-ECCS \#2032258.}}
\begin{document}

\maketitle

\begin{abstract}
    In this paper, we formulate
    an optimal ordering policy 
    as 
    a stochastic control 
    problem where each firm decides the amount of input goods to order from their upstream suppliers based on the current inventory level of its output good. 
    For this purpose, we provide a closed-form solution for the optimal request of the raw materials for given a fixed production policy.
    We implement the proposed policy
    on a 15-firm acyclic network based on a real product supply chain.
    We first simulate ideal demand situations, 
    and then we implement demand-side shocks (i.e., demand levels outside of those considered in the policy formulation) and supply-side shocks (i.e., halts in production for some suppliers) 
    to evaluate the robustness of the proposed policies.
\end{abstract}

\section{Introduction}
    In a supply network (SN), firms maximize their profits by reducing their  holding costs and stockouts (shortages) to meet 
    demand.
    This inventory management problem has been a topic of interest for researchers for over a century, with the introduction of the economic order quantity model in 1913~\cite{harris1913}.
    The body of research has grown considerably in the nearly eleven decades since, with researchers expanding the problem from a single firm to looking at entire supply chains composed of multiple stages, referred to as multi-echelon supply chains~\cite{clark1960optimal}.
    There has been a wave of literature reviews and summaries on this area of research over the past decade, either highlighting the state of the art in general~\cite{bushuev2015review,firoozi2018multi} or highlighting specific subsections of research, such as work using stochastic demand~\cite{DEKOK2018955} or focusing on the various types of control strategies used~\cite{sun2020review}.
    The work done on these multi-echelon supply chains can be decomposed further by the optimization method used~\cite{firoozi2018multi,sun2020review}, e.g.,
    heuristics~\cite{YEONGJOONYOO1997729,HNAIEN2017223}, genetic algorithms~\cite{ZHOU20132039}, mixed integer programming~\cite{firouz2017integrated,firoozi2018multi}, or dynamic programming~\cite{graves2000optimizing,graves2008strategic,parker04twoechelon}; the type of demand used~\cite{DEKOK2018955}, e.g., deterministic, stationary stochastic, non-stationary stochastic; and the complexity of the network structure~\cite{firoozi2018multi}, e.g., serial, two-echelon, spanning tree.
    
    
    
    Much of the recent literature on supply chain management has been focused on safety-stock placement, or where in the network excess stock should be stored in order to meet targeted delivery times for uncertain consumer demand~\cite{ERUGUZ2016110,graves2008strategic,graves2016strategic}.
    Once optimal stock placement locations are determined, firms in the network can operate using simple base-stock policies for inventory management, which reduces the need for communication spanning the majority of the supply chain.
    On the other hand, the strategies to solve multi-stage stochastic problems have been developed so the policies can be computed without optimization and to get a better insight of the conditions ruling the optimal solutions~\cite{7040021}. This paper considers networks without any strategic placement of inventory, and given such a network attempts to determine the optimal ordering policy for each firm.

    In this work, we formulate a multi-stage stochastic control problem to optimize a single firm's supply ordering policy, given a known stationary stochastic demand distribution. To that end, we rigorously 
    prove the existence of a threshold policy by using Bellman's recursion.  
    We then apply the proposed policies to each firm in a network in a distributed fashion
    by propagating the demand distribution from the distributor firms through their decision-making processes and approximating the upstream demands via Monte Carlo simulations. 
    Our proposed solution can be applied to any general multi-echelon networks
    under the assumption that supplier firms do not require inputs from downstream firms.
    In our simulations we compare the effect this distributed optimization ordering policy framework has on the individual firms' profits as well the overall social welfare of the network.

    The paper is organized as follows.
    We first introduce in Section~\ref{sec:model} the characteristics of our proposed SN model.
    In Section~\ref{sec:optimization} we formulate the stochastic control problem for multistage requests of each firm, and derive closed form solution to the optimal requests and the cost-to-go function.
    In Section~\ref{sec:simulations} we 
    illustrate the effectiveness of the proposed framework via simulations. 
    Lastly, in Section~\ref{sec:conclusion} we conclude and highlight future directions\phil{.}


    
    

\section{Model Definition} \label{sec:model}


In this section we first formally define the structure of our SN and introduce the characteristics of firms and goods. We then describe how demand is modeled for the two different types of firms.
Lastly, we introduce 
the inventory dynamics to facilitate the formulation of 
the optimization problem.

\subsection{SN Model} 
We begin by defining our production network as the graph~$\G=(\V,\E)$. 
There are $n$ different firms in the network, denoted by the set of vertices $\V = \{v_1,\dots,v_n\}$. 
Each firm $v_i$ is located in one of $L$ echelons, denoted $l_i\in[1,\dotsc,L]$. 
For example if firm $v_i$ is in the most downstream (rightmost) echelon, then $l_i=1$ as seen in Figure~\ref{fig:notation_fig}. firms in echelon $l_i=1$ are termed distributors and have no firms as out-neighbors, and firms in echelons $l_i>1$ are termed suppliers. 
The directed edge set $\E$ contains all supply chain connections between firms in the graph.
If $(v_i,v_j)\in \E$, 
there is a flow of material from $v_i$ to $v_j$ for at least some time-step~$k$.
We also define the set of node $v_i$'s out-neighbors 
as~$\mathcal{N}_i^-$ and the set of  in-neighbors as~$\mathcal{N}_i^+$.

There are $m=n$ different goods in the network, and this number is fixed over all time-steps. 
The set of all good types is given by $\Z = \{z_1,\dotsc,z_m\}$. 
Each 
$z_i$ has a bill of materials $B_i\in \mathbb R^m$, which is the vector containing the quantities of intermediary goods or inputs required for the manufacturing of one unit of $z_i$. $B_i$ has a length of $m$, and does not change with time. 
Several examples of $B_i$ are given in the caption of Figure~$\ref{fig:notation_fig}$.
We assume each firm~$v_i$ outputs one and only one type of good, 
and because $m=n$ there is exactly one firm manufacturing each good with the same index. In other words, $v_i$ is the only firm that manufactures good $z_i$.

\begin{figure} 
    \centering
    \begin{tikzpicture}[every label/.append style={text=red, font=\large}]
        \Vertex[y=1.5,size=1,label=$v_1$,fontsize=\large]{1} \Vertex[y=-1.5,size=1,label=$v_2$,fontsize=\large]{2} \Vertex[x=4,size=1,label=$v_3$,fontsize=\large]{3} \Vertex[x=6,shape=coordinate]{demand}
        \Edge[Direct,lw=3pt,Math,label=u^k_{31},fontsize=\normalsize](1)(3)
        \Edge[Direct,lw=3pt,Math,label=u^k_{32},fontsize=\normalsize](2)(3)
        \Edge[Direct,lw=3pt](3)(demand)
        \Text[y=1,position=below]{$z_1$}
        \Text[y=-2,position=below]{$z_2$}
        \Text[x=4,y=-.5,position=below]{$z_3$}
        \Text[x=5,position=above right, distance=1mm]{$\omega_3^k$}
        \Text[x=4,y=-3]{$l_3=1$}
        \Text[y=-3]{$l_1=l_2=2$}
        \draw[dashed] (3.1,-3) -- (3.1,2);
        \draw[dashed] (4.9,-3) -- (4.9,2);
        \draw[dashed] (-.9,-3) -- (-.9,2);
        \draw[dashed] (.9,-3) -- (.9,2);
    \end{tikzpicture}
    \caption{In this example graph used to illustrate the application of our notation, firm $v_3$ is in echelon $l_3=1$ and is a distributor of product $z_3$ and $v_1,v_2$ are both in echelon $l_1=l_2=2$ and are suppliers of goods $z_1$ and $z_2$, respectively. The bill of materials for goods $z_1$ and $z_2$ are both~$B_1=B_2=[0,0,0]^T$ as we assume their production requires no other raw materials, while for $z_3$ the bill of materials is $B_3 = \left[2,1,0\right]^T$. The random demand for good $z_3$ at time-step $k$ is $w_3^k$, and the demand seen by firms $v_1$ and $v_2$ are $u_{31}^k$ $u_{32}^k$, which are the orders placed by $v_3$ for each good.}
    \label{fig:notation_fig}
\end{figure}
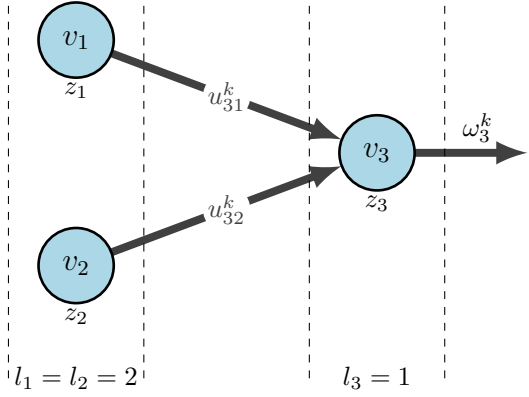

Each firm $v_i$ has an inventory policy for both intermediate goods (inputs) and products (outputs). For each good $z_r$ that is either $z_i$ or has a non-zero value in $B_i$, firm $v_i$ keeps an on-site inventory with target minimum and maximum values.
The way a firm manages inventory for each good is referred to as their inventory policy, 
and in this paper each firm $v_i$ needs to maintain its inventory $x_{ir}^k$ between the given lower bound $x^L_{ir}$ and upper bound $x^U_{ir}$. That is, $x^L_{ir}\leq x^k_{ir}\leq x^U_{ir}$.

\subsection{Demand Modeling--Distributors}
Let $\mathcal Z^D$ be the set of goods produced by the distributors. We capture the demand for all goods $r \in \mathcal Z^D$ in the set of demands $\mathcal{W}^k=\{\omega^k_r\}_{r\in \mathcal Z^D}$. 
At each time-step $k$, $\Omega^k_r$ is a random variable associated with a probability mass function (pmf) 
$p_{\Omega_r}(\omega_r^k)$
for good $z_r$, such that $p_{\Omega_r}(\omega_r^k)=\mathbb{P}(\Omega_r=\omega_r^k)$.
Because $m=n$, all demand for a product $z_i$ is directed to the $v_i$.
We also assume the demand faced by any distributor is independent of all other distributor demands and it has a support bounded in the interval $\mathcal{I}_i^k=[d_i^k,D_i^k]$.

\subsection{Inventory Dynamics}\label{sec:inv_dynamics}
In addition to deciding how much of each product to order at time $k$, firm $v_i$ also makes a decision on how much good $z_i$ to produce. 
We define the variable~$N_i^k$ to be the amount of good $z_i$ firm $v_i$ plans to  request at time $k$. 
We define~$y_i^k = \left[y_{i1}^k \ y_{i2}^k \ \cdots \  y_{im}^k\right]^T$ as the vector of on-hand stock of each good $z_r$ at firm $v_i$, defined as $y_{ir}^k = x_{ir}^k +  N^k_{i}B_{ir}$ 
with $x_{ir}^k$ being the inventory level of good $r$ at the start of time-step $k$.
The production function~$\pi$ could depend on additional variables directly, such as demand.
However, in this work each firm~$v_i$ uses a
rudimentary production policy such that 
the maximum amount of $z_i$ will be produced 
using the on-hand stock $y_{ir}^k$  under the constraints of $B_i$. 

For firm $v_i$, the inventory dynamics 
of any good $z_r$ at time $k$ are determined by the current inventory levels,
the amount of good received $N^k_{i}B_{ir}$, and the random demand~$\omega^k_i$. 
We describe these dynamics as
\begin{equation}\label{eq:init_inv_dynamics}
    x_{ir}^{k+1} = \begin{cases} 
      x_{ii}^k + N_i^k - \omega_i^k  & r = i \\
      y_{ir}^k - \pi(y_{i}^k)B_{ir} & r \neq i ,
      \end{cases}
\end{equation} 
where the first equation is for the output good of firm $v_i$, the second equation holds for input goods, 
and $B_{ir}$ is the amount of $z_r$ used to produce $z_i$, i.e., $B_{ir}$ is the $r$th entry of $B_i$.
Each firm $v_i$ also has a number of costs/penalties relating to the good produced $z_i$ that subtract from profits: 
the production cost per unit $c_i$, the penalty for having a shortage $s_i$, and the holding cost $h_i$. 



\subsection{Demand Modeling--Suppliers}
The amount of a good $z_j$ flowing from firm $v_j$ to $v_i$  
at time $k$ is the amount of $z_j$ ordered by $v_i$, and is 
given by $N_j^k B_{ji}$.
We assume no time lag between order placement and fulfillment, i.e., an order is placed at the beginning of time-step $k$ and is delivered that same time-step.
The amount ordered is determined by the inventory control policy used by firm $v_i$, and 
$N_i^k$
is also referred to as the control action.
%
%

For goods output by a supplier firm $v_i$, that is, for $i\in \mathcal{W}^k\setminus\mathcal Z^D$, 
the demand at each time-step is the sum of all orders placed for good $z_i$
, that is, for $v_i$ such that $l_i>1$,
\begin{equation}\label{eq:Udef}
    \omega^k_i
    = \sum_{j\in\mathcal{N}_i^-}^{} 
    N_j^k B_{ji},
\end{equation}
where, recall, $\mathcal{N}_i^-$ is the set of all out-neighbors of firm $v_i$.


\subsection{Problem Formulation} \label{subsec:problem}
We take the perspective of a production network manager trying to find the optimal ordering strategy to manage inventory levels at each firm.
In other words, we want to find $N_i^k$ at each time-step $k$ that minimizes a
cost function dependent on the parameters $c_i$, $s_i$, and $h_i$ at each time-step.
Given that we find an optimal strategy, we then want to understand how incorporating a production decision $\pi(y_i^k)$ impacts the response of our system.
Lastly, we aim to understand how 
shocks (e.g., production outages $N_i^k = 0$ and shortages $x_{ii}^k+N_i^k < \omega^k_i$) 
impact the behavior of a given network and check the robustness of the production-inventory policy.

\section{Optimal Request}\label{sec:optimization}

We consider the profit-availability goal of the supply chain market, in which a firm in the network needs to supply goods from a sequence of downstream firms to meet a random demand. In this section we develop the scope of the decision problem 
to look for optimal inventory policies for output goods $r =i$ at each firm.




Taking into account the aspects mentioned in Section~\ref{subsec:problem} and the dynamics defined in \eqref{eq:init_inv_dynamics}, 
the decision problem of each individual node can be addressed by a stochastic control program: 
\begin{subequations} \label{eq:cost_function_N}
    \begin{align}
        &\text{minimize} \ \ \mathbb{E}\Bigg\{ \sum_{k=0}^{T-1} \bigg( c_i N_i^k +  f_i(x^k_{ii}+N_i^k-\omega_i^k)\bigg)  \Bigg\} \\
        & \ \ \   \text{s.t.} \ \ \ \ \ \ x_{ii}^{k+1} = x_{ii}^k + N_i^k - \omega_i^k,  \ \ \ \ \ \ \ \ \ \ \ \ \ \ \ \ \ \label{eq:DP_Algorithm_3}\\
        &\ \ \ \ \ \ \ \ \ \ \ \  x^L_{ii} \leq x_{ii}^k \leq x^U_{ii},\label{eq:DP_Algorithm_4}\\
        &\ \ \ \ \ \ \ \ \ \ \ \  x^L_{ii} + D_i^k \leq x_{ii}^k + N_i^k \leq x^U_{ii},\label{eq:DP_Algorithm_5}\\
        &\ \ \ \ \ \ \ \ \ \ \ \  N_i^k \geq 0,
    \end{align}
\end{subequations}
where $f_i(x^T_{ii}) = 0$, and we aim to find a control policy at each stage $k = 0,\dots,T-1$, that maps the current inventory level $x_{ii}^k$ to the optimal request $N_i^{k}$, where the price of producing $N_i^k$ goods is represented by $c_i>0$. Here $f_i$ is a penalty function tracking the systematic risk at each firm due to an unmet demand or having too many output goods stored. 
An example of common use in the stochastic inventory control literature is the holding/shortage penalty
{\small
\begin{align*}
    f_i(x^k_{ii}+N_i^k-\omega_i^k) &= s_i\big(\omega_i^k-x^k_{ii}-N_i^k\big)_+ + h_i\big(x^k_{ii}+N_i^k-\omega_i^k\big)_+,
\end{align*}}
where $(u)_+ = \text{max}(u,0)$. The penalty characterizes the cost of having a shortage on the inventory level $\big(\omega_i^k-x^k_{ii}-N_i^k\big)_+$ at a price $s_i$ and having a holding $\big(x^k_{ii}+N_i^k-\omega_i^k\big)_+$ at a price $h_i$, where $s_i,h_i>0$.  

Observing the additive structure of the updates in $x_i^k$ and $N_i^k$, we can rewrite \eqref{eq:cost_function_N} in the following form:
\begin{subequations} \label{eq:cost_function_a}
    \begin{alignat}{4}  
        &\text{minimize} \ \ \mathbb{E}\Bigg\{ \sum_{k=0}^{T-1} \bigg( c_i N_i^k +  f_i(a^k_{ii}-\omega_i^k)\bigg)  \Bigg\} \\
        &\ \ \ \   \text{s.t.} \ \ \ \ \ \ x_{ii}^{k+1} = y_{ii}^k - \omega_i^k,  \ \ \ \ \ \ \ \ \ \ \ \ \ \ \ \ \ \\
        &\ \ \ \ \ \ \ \ \ \ \ \ \ \ x^L_{ii} + D_i^k \leq y_{ii}^k \leq x^U_{ii},\\
        &\ \ \ \ \ \ \ \ \ \ \ \ \ \ y_{ii}^k \geq x_{ii}^k,
    \end{alignat} 
\end{subequations}
where $y_{ii}^k = x_{ii}^k + N_i^k, k = 1,\cdots,T-1$, is the on-hand stock of output goods after producing $N_i^k$ units and the expectation is taken over $\omega_i^k$.
Note that the on-hand stock $y_{ii}^k$ is constrained to remain above the lower bound of the current stock $x_{ii}^k$ plus the maximum possible demand $D_i^k$ of an output good. 
This constraint will allow the optimization of the request $N_i^k$ to meet the constraints imposed on $x_{ii}^k$ at any stage $k$.
%
The stochastic control problem defined above can be solved with the Bellman's recursion. To this end, we define the cost-to-go function as follows: 
\begin{subequations}
    \begin{alignat}{3}  
        J_i^{T}(x^T_{ii}) =& \thinspace 0, \label{eq:DP_Algorithm_1}\\
        J_i^{k}(x^k_{ii}) = &\underset{y_{ii}^k\geq x_{ii}^k}{\text{min}} Q_i^k (y_{ii}^k) - c_i x_{ii}^k \label{eq:DP_Algorithm_2}\\
        &\ \ \text{s.t.} \ \ \ \underline{a}_{ii} \leq y_{ii}^k \leq \overline{a}_{ii}, \label{eq:constraint_on-hand}
    \end{alignat}
\end{subequations}
%
%
\normalsize
\noindent
for $k=0,\dots,T$, where $\underline{a}_{ii} = x_{ii}^L + D_i^k$, $\overline{a}_{ii} = x_{ii}^U$. The objective of \eqref{eq:DP_Algorithm_2} is defined as the state-action cost-to-go function, such that, for stage $k=0,\dots,T-1$, we have 
\vspace{-0.5ex}
\begin{equation}\label{eq:state_cost}
    Q_i(y_{ii}^k) = c_iy_{ii}^k +  \mathbb{E}\Big\{f_i(y_{ii}^k-\omega_i^k) + J_i^{k+1}\big(y_{ii}^k-\omega_i^k\big)\Big\}.
\end{equation}
At each stage $k$, the cost-to-go function takes into account the minimum expected cost from the current stage with the given on-hand stock $y_{ii}^k$ at the end of the decision process. Thus, for any stage $k$ the optimal on-hand stock can be solved from 
\vspace{-0.5ex}
\begin{equation} \label{eq:general_policy}
    y_{ii}^{k,\star} = \underset{y_{ii}^k\geq x_{ii}^k}{\text{argmin}} \  Q_i(y_{ii}^k),
\end{equation}
where $y_{ii}^{k,\star}$ is subjected to \eqref{eq:constraint_on-hand}.

Under a numerical approach, 
$\eqref{eq:DP_Algorithm_2}$, $\eqref{eq:state_cost}$, and $\eqref{eq:general_policy}$ can be seen as a definition of the optimal conditions for $\eqref{eq:cost_function_a}$. However, the Bellman's recursion does not lead to a polynomial time algorithm for solving the stochastic control problem as both the state space and the action space are continuous. To circumvent this challenge, we proceed to analytically solve the stochastic control problem by characterizing the structure of the optimal control policy.
\begin{proposition}\label{prop:convex_a}
The cost-to-go function $J_{i}^k(x_{ii}^k)$, for all $k=0,\cdots,T-1$ is convex in $x_{ii}^k$ given that the state cost-to-go function $Q_i^k$ is convex in the on-hand stock $y_{ii}^k$.
\end{proposition}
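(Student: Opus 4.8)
The plan is to recognize $J_i^k$ as the sum of an affine term and a partial minimization of a convex function over a jointly convex constraint set, both of which preserve convexity. Reading off \eqref{eq:DP_Algorithm_2}, the term $-c_i x_{ii}^k$ is affine in $x_{ii}^k$ and hence convex, so it suffices to prove that
\[
g_i^k(x_{ii}^k) := \min_{y_{ii}^k} \left\{ Q_i^k(y_{ii}^k) : y_{ii}^k \geq x_{ii}^k, \ \underline{a}_{ii} \leq y_{ii}^k \leq \overline{a}_{ii} \right\}
\]
is convex in $x_{ii}^k$; then $J_i^k = g_i^k - c_i x_{ii}^k$ is a sum of convex functions and the claim follows.

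First I would verify that the feasible region $C = \{(x_{ii}^k, y_{ii}^k) : y_{ii}^k \geq x_{ii}^k, \ \underline{a}_{ii} \leq y_{ii}^k \leq \overline{a}_{ii}\}$ is convex, since it is the intersection of the half-space $\{y_{ii}^k \geq x_{ii}^k\}$ with the slab $\{\underline{a}_{ii} \leq y_{ii}^k \leq \overline{a}_{ii}\}$. Viewing $Q_i^k(y_{ii}^k)$ as a function of the pair $(x_{ii}^k, y_{ii}^k)$ that merely fails to depend on $x_{ii}^k$, the assumed convexity of $Q_i^k$ makes it jointly convex. Then $g_i^k$ is the marginal of a jointly convex function over the convex set $C$, and I would invoke the standard partial-minimization argument: for $x^{(1)}, x^{(2)}$ and $\lambda \in [0,1]$, choosing near-optimal $y^{(1)}, y^{(2)}$ and using that $(\lambda x^{(1)} + (1-\lambda) x^{(2)}, \lambda y^{(1)} + (1-\lambda) y^{(2)}) \in C$ together with convexity of $Q_i^k$ yields $g_i^k(\lambda x^{(1)} + (1-\lambda) x^{(2)}) \leq \lambda g_i^k(x^{(1)}) + (1-\lambda) g_i^k(x^{(2)})$.

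To make the argument self-contained and to expose the threshold structure promised in the introduction, I would also give the explicit minimizer. Let $y_i^{k,\star}$ be the minimizer of $Q_i^k$ on $[\underline{a}_{ii}, \overline{a}_{ii}]$, which exists since $Q_i^k$ is convex on a compact interval. Because $Q_i^k$ is non-decreasing to the right of $y_i^{k,\star}$, the constrained optimum is $y_{ii}^{k,\star} = \max(x_{ii}^k, y_i^{k,\star})$, so $g_i^k(x_{ii}^k) = Q_i^k(\max(x_{ii}^k, y_i^{k,\star}))$. This is constant for $x_{ii}^k \leq y_i^{k,\star}$ and equals the convex increasing branch $Q_i^k(x_{ii}^k)$ for $x_{ii}^k \geq y_i^{k,\star}$; the two pieces meet continuously with a non-decreasing slope, so $g_i^k$ is convex, confirming a base-stock threshold at $y_i^{k,\star}$.

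The main obstacle is the coupling constraint $y_{ii}^k \geq x_{ii}^k$, which makes the feasible set for $y_{ii}^k$ depend on the parameter $x_{ii}^k$; this blocks a naive exchange of minimization and convex combination and is precisely why the joint convexity of $C$ must be established rather than merely the convexity of $Q_i^k$. I would also confirm feasibility and attainment: on the relevant domain $x_{ii}^k \leq \overline{a}_{ii}$ the interval $[\max(x_{ii}^k, \underline{a}_{ii}), \overline{a}_{ii}]$ is nonempty, and compactness with continuity of $Q_i^k$ guarantees the minimum is achieved, so $g_i^k$ is finite and the estimates above are valid.
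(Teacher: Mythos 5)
Your proof is correct and takes essentially the same route as the paper: both reduce the claim to the fact that partial minimization of a jointly convex function over the convex coupled constraint set $\big\{(x_{ii}^k,y_{ii}^k):\, y_{ii}^k\geq x_{ii}^k\big\}$ preserves convexity, which the paper invokes via its appendix proposition and you prove inline with the standard near-optimal-points argument (additionally handling the box constraint $\underline{a}_{ii}\leq y_{ii}^k\leq \overline{a}_{ii}$ explicitly, and supplying a bonus explicit-minimizer argument $y_{ii}^{k,\star}=\max(x_{ii}^k,y_i^{k,\star})$ that anticipates the paper's Lemma~1). The only cosmetic differences are that you split off the affine term $-c_i x_{ii}^k$ before minimizing and dispense with the paper's induction over stages, which is indeed superfluous once convexity of $Q_i^k$ is taken as the hypothesis, exactly as the proposition is stated.
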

\begin{proof}
Note that $J_i^T$ is trivially convex by its definition. Now, assume that $J_i^{k+1}$ is convex in $x_{ii}^k$. Since, the expectation preserves the convexity, $Q(y_{ii}^k)-c_ix_{ii}^k$ is jointly-convex in $(y_{ii}^k,x_{ii}^k)$, where $C=\big\{(y_{ii}^k,x_{ii}^k): y_{ii}^k \geq x_{ii}^k\big\}$ is a convex set on $(y_{ii}^k,x_{ii}^k)$. Thus, by Proposition \ref{prop:convex_general} (see the Appendix), we have that $J_i^k(x_{ii}^k)$ is convex in $x_{ii}^k$. 
By induction, we can conclude that for all $~k = 0,\dots,T-1$, $J_i^k(x_{ii}^k)$ is convex in $x_{ii}^k$.
\end{proof}
A direct conclusion of Proposition $\ref{prop:convex_a}$ is the following form of the optimal request policy $N_i^{k,\star}(x_{ii}^k)$:
\begin{lemma} \label{lem:dp}
For each stage $k=0,\dots,T-1$, there exists a threshold $M_i^k$ which is independent of the current inventory level $x_{ii}^k$ and is
a function of the parameters $c_i$, $s_i$, $h_i$, such that the optimal request policy has the form 
\begin{equation}\label{eq:threshold}
    N_i^{k,*} = \Big[(M_i^k - x_{ii}^k)_+\Big]^{\overline{N}_i^k(x_{ii}^k)}_{\underline{N}_i^k(x_{ii}^k)},
\end{equation}

where $[x]^\alpha_\beta := \min\{\alpha, \max\{x, \beta\}\}$ and 
\begin{align*}
    \overline{N_i}^k(x_{ii}^k)&:=x_{ii}^U - x_{ii}^k,\\
    \underline{N_i}^k(x_{ii}^k)&:= x_{ii}^L + D_i^k - x_{ii}^k.
\end{align*}
\vspace{-3ex}
\end{lemma}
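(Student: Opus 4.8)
The plan is to exploit the one structural feature that makes a base-stock (threshold) policy optimal: after the reformulation \eqref{eq:cost_function_a}, the Bellman objective $Q_i(y_{ii}^k)$ depends only on the post-order stock $y_{ii}^k$ and \emph{not} on the current inventory $x_{ii}^k$, which enters the program \eqref{eq:DP_Algorithm_2}--\eqref{eq:constraint_on-hand} only through the constraint $y_{ii}^k \ge x_{ii}^k$. First I would record that $Q_i$ is convex in $y_{ii}^k$: the penalty $f_i$ (e.g.\ the holding/shortage penalty) is convex, $J_i^{k+1}$ is convex by Proposition~\ref{prop:convex_a}, the expectation preserves convexity, and $c_i y_{ii}^k$ is linear; the shortage/holding growth of $f_i$ makes $Q_i$ coercive, so a finite minimizer exists.

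Next I would define $M_i^k$ to be the \emph{smallest} unconstrained minimizer of $Q_i$ over $\mathbb{R}$, i.e.\ $M_i^k := \min \arg\min_{y} Q_i(y)$. Because $Q_i$ carries no dependence on $x_{ii}^k$, this threshold is a constant determined only by $c_i,s_i,h_i$, the demand law, and $J_i^{k+1}$; in particular it is independent of the current inventory level, which is exactly the assertion of the lemma. I would then invoke the elementary fact that the minimizer of a convex function over an interval is the clamp (projection) of its unconstrained minimizer onto that interval: with least minimizer $M_i^k$, $\arg\min_{y\in[\ell,u]}Q_i(y) = [M_i^k]^{u}_{\ell} = \min\{u,\max\{M_i^k,\ell\}\}$. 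This holds since a convex function is nonincreasing to the left of its minimizing set and nondecreasing to its right, and the least-minimizer convention makes the clamp well defined even if $Q_i$ has a flat minimum.

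Applying this with the feasible set for $y_{ii}^k$ obtained by intersecting \eqref{eq:constraint_on-hand} with $y_{ii}^k\ge x_{ii}^k$, namely $[\max\{x_{ii}^k,\underline{a}_{ii}\},\overline{a}_{ii}]$ where $\underline{a}_{ii}=x_{ii}^L+D_i^k$ and $\overline{a}_{ii}=x_{ii}^U$, gives $y_{ii}^{k,\star}=[M_i^k]^{\overline{a}_{ii}}_{\max\{x_{ii}^k,\underline{a}_{ii}\}}$. Substituting $N_i^{k,\star}=y_{ii}^{k,\star}-x_{ii}^k$ and simplifying the nested $\min$/$\max$, using $\max\{M_i^k-x_{ii}^k,0,\underline{a}_{ii}-x_{ii}^k\}=\max\{(M_i^k-x_{ii}^k)_+,\underline{N}_i^k\}$, yields precisely $N_i^{k,\star}=[(M_i^k-x_{ii}^k)_+]^{\overline{N}_i^k}_{\underline{N}_i^k}$, the claimed form. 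Here the $(\cdot)_+$ encodes $N_i^k\ge 0$ (i.e.\ $y_{ii}^k\ge x_{ii}^k$) and the outer clamp encodes the inventory bounds.

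The main obstacle is conceptual rather than computational: justifying that the minimizer of the Bellman objective does not shift with $x_{ii}^k$. Everything hinges on the reformulation \eqref{eq:cost_function_a}, in which the inventory appears only in the constraint, and on selecting the least minimizer so the clamp $[\,\cdot\,]^{\alpha}_{\beta}$ is unambiguous. A minor point to verify is feasibility, $\max\{x_{ii}^k,\underline{a}_{ii}\}\le \overline{a}_{ii}$, which is guaranteed by the state constraint \eqref{eq:DP_Algorithm_4} together with $D_i^k$ being the demand upper bound; the remaining $\min$/$\max$ algebra is routine.
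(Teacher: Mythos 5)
Your proposal is correct and follows essentially the same route as the paper's proof: both rest on the convexity of $Q_i$ (via Proposition~\ref{prop:convex_a}), define $M_i^k$ as the unconstrained minimizer of $Q_i$ (which is independent of $x_{ii}^k$ since $x_{ii}^k$ enters only through the constraint), and obtain $y_{ii}^{k,\star}$ as the clamp of $M_i^k$ onto the feasible interval before substituting $N_i^{k,\star}=y_{ii}^{k,\star}-x_{ii}^k$. The only cosmetic difference is that the paper establishes $y_{ii}^{k,\star}=\max(M_i^k,x_{ii}^k)$ by an explicit convexity contradiction and then projects onto $[\,x_{ii}^L+D_i^k,\,x_{ii}^U\,]$, whereas you invoke the interval-clamping fact directly (and add useful care about coercivity and the least-minimizer convention).
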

\begin{proof}
Take into account the Bellman's recursion at stage $k$ as in $\eqref{eq:DP_Algorithm_2}$, in which the optimization is solved for a fixed $x_{ii}^k$, where $Q_i(y_{ii}^k)$ is convex by Proposition $\ref{prop:convex_a}$. Let $M_i^k= \text{argmin}_{y_{ii}^k\in \mathbb{R}}Q_i(y_{ii}^k)$, thus the constrained minimizer of $y_{ii}^k$ in $\eqref{eq:DP_Algorithm_2}$ is of the form $y_{ii}^{k,\star} = \text{max}(M_i^k,x_{ii}^k)$. Note that, it is trivial that our claim holds when $M_i^k \geq x_{ii}^k$. Otherwise, we show that $y_{ii}^{k,\star} = x_{ii}^k$ by contradiction. Assume that $y_{ii}^{k,\star}=\tilde{a}_{ii}^{k,\star}\neq x_{ii}^k$ such that $Q_i(\tilde{a}_{ii}^{k,\star})<Q_i(x_{ii}^k)$. Thus, we can say there exists an $\alpha \in (0,1)$, such that $x_{ii}^k = \alpha \tilde{a}_{ii}^{k,\star} + (1-\alpha)M_i^{k}$. By convexity of $Q_i$ in $y_{ii}^k$, we have 
\begin{align*}
    Q_i(\tilde{a}_{ii}^{k,\star})<Q_i(x_{ii}^k) &= Q_i(\alpha \tilde{a}_{ii}^{k,\star} + (1-\alpha)M_i^{k})\\
    &\leq \alpha Q_i( \tilde{a}_{ii}^{k,\star}) + (1-\alpha)Q_i(M_i^k),
\end{align*}
with the first inequality due to assumption of the existence of $\tilde{a}_{ii}^{k,\star}$ as the minimizer of $Q_i(y_{ii}^k)$ instead of $x_{ii}^k$. Therefore, $Q_i(\tilde{a}_{ii}^{k,\star})<Q_i(M_i^k)$,which is a contradiction since $M_i^k$ is the minimizer of the unconstrained problem. Therefore, $y_{ii}^{k,\star}=\text{max}(M_i^k,x_{ii}^k)$ solves $\eqref{eq:DP_Algorithm_2}$. Now, in order to assure that $y_{ii}^{k,\star}$ is in the feasible set of $y_{ii}^k$ for any $k=1,\cdots,T-1$, we will project $\text{max}(M_i^k,x_{ii}^k)$ into $\eqref{eq:constraint_on-hand}$, which yields the following characterization of the optimal policy respect to $y_{ii}^k$: 
\begin{align*}
    y_{ii}^{k,\star} = \Big[\text{max}\big(M_i^k,x_{ii}^k\big)\Big]^{x_{ii}^U}_{x_{ii}^L+D_i^k},
\end{align*}
where $[x]^\alpha_\beta:=\text{min}\{\alpha,\text{max}\{x,\beta\}\}$. Recalling, the relationship between $y_{ii}^k$ and $N_i^k$, $y_{ii}^k = x_{ii}^k + N_i^{k}$, we conclude that
\begin{align*}
    N_i^{k,\star}\big(x_{ii}^k\big) = \Big[\big(M_i^k-x_{ii}^k\big)_+ \Big]^{x_{ii}^U-x_{ii}^k}_{x_{ii}^L+D^k-x_{ii}^k},
\end{align*}
where $\overline{N}_i^k(x_{ii}^k):= x_{ii}^U-x_{ii}^k$ and $\underline{N}_i^k(x_{ii}^k):=x_{ii}^L+D_i^k-x_{ii}^k$.
\end{proof}
So far, we have provided the formal proofs on the convexity of the cost-to-go function and the characterization of the optimal policy. Now, we proceed to provide a closed-form characterization of the thresholds. To that end, we define the following mapping functions that yields from backward recursion. Let $c_i,s_i,h_i>0$. Then, we have:
\begin{subequations}\label{eq:F}
    \begin{align}
        F_i^{T-1}(y_{ii}) &= \mathbb{E}\big[f^\prime(y_{ii}-\omega_i^{T-1})\big],\\
        \begin{split}
        F_i^k (y_{ii}) &= \mathbb{E} \bigg\{ f^\prime(y_{ii}-\omega_i^k)-c_i \mathds{1}\big(F_i^{k+1}(y_{ii}-\omega_i^k)\leq -c_i\big) \\
    &\quad +F_i^{k+1}(y_{ii}-\omega_i^k)\mathds{1}\big(F_i^{k+1}(y_{ii}-\omega_i^k)>-c_i\big)\bigg\},
    \end{split}
    \end{align}
\end{subequations}
for $k=0,\cdots,T-1$, where $f^\prime(a_{ii}-\omega_i^{k})$ is the derivative, wherever it is defined, of the penalty function $f$ with respect to $y_{ii}$. Let $F_i^{k,\text{min}} = \inf_{y_{ii}\in \mathbb{R}}F_i^k(y_{ii})$ and $F_i^{k,\text{max}} = \sup_{y_{ii}\in \mathbb{R}}F_i^k(y_{ii})$. Define, for $k=0,\dots,T-1$,
\begin{equation}\label{eq:threshold_definition}
    \tilde{M}_i^k = 
    \begin{cases}
    {F_i^k}^{-1}(-c_i) & \text{if $F_i^{k,\text{min}}<-c_i<F_i^{k,\text{max}}$},\\
    x^L + D_i^k & \text{if $-c_i \leq F_i^{k,\text{min}}$},\\
    x^U & \text{if $-c_i \geq F_i^{k,\text{max}}$},
    \end{cases}
\end{equation}
where ${F_i^k}^{-1}(-c_i)=\inf\{y_{ii}:F_i^k(y_{ii}) \geq -c_i\}$, and
\begin{subequations}\label{eq:G}
    \begin{align}
        G_i^{T-1}(y_{ii}) &= \mathbb{E}[f(y_{ii}-\omega_i^{T-1})],\\
        \begin{split}
            G_i^k(y_{ii}) &= \mathbb{E}\bigg\{f(y_{ii}-\omega_i^k)+\Big[c_i(M_i^{k+1}-y_{ii}+\omega_i^k)\\
            &\qquad +G_i^{k+1}(M_i^{k+1})\Big] \mathds{1}(F_i^{k+1}(y_{ii}-\omega_i^k)\leq-c_i)\\
            &\quad + G_i^{k+1}(y_{ii}-\omega_i^k) \mathds{1}(F_i^{k+1}(y_{ii}-\omega_i^k)>-c_i)\bigg\},
        \end{split}
    \end{align}
\end{subequations}
for $k=0,\dots,T-1$. 
\begin{lemma}\label{lem:T_G}
The following claims are true for all stage $k=0,\dots,T-1$ in the sequence of functions $\big\{ F_i^k\big\}$ and $\big\{G_i^k\big\}$:
    \begin{enumerate}[label=(\alph*)]
        \item $\tilde{M}_i^k$ is well-defined;
        \item $G_i^k(y_{ii})$ is convex and differentiable;
        \item ${G_i^k}^\prime(y_{ii})=F_i^k(y_{ii})$;
        \item $F_i^k(y_{ii})$ is nondecreasing, so $F_i^{k,\text{min}}=F_i^k(x_{ii}^L+D_i^k)$ and $F_i^{k,\text{max}}=F_i^k(x_{ii}^U)$.
    \end{enumerate}
\end{lemma}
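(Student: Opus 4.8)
The plan is to prove all four claims \emph{simultaneously} by backward induction on the stage index, running from $k=T-1$ down to $k=0$. The natural order within each step is (c) $\Rightarrow$ (d) $\Rightarrow$ (b) $\Rightarrow$ (a): once I know that $F_i^k$ is the derivative of $G_i^k$, monotonicity of $F_i^k$ is automatic from convexity of $G_i^k$ (or vice versa), convexity of $G_i^k$ merely restates that its derivative is nondecreasing, and well-definedness of $\tilde M_i^k$ is just the statement that the generalized inverse $\inf\{y_{ii}:F_i^k(y_{ii})\ge -c_i\}$ makes sense for a nondecreasing function whose endpoint values $F_i^{k,\text{min}},F_i^{k,\text{max}}$ either straddle $-c_i$ or do not, matching the three cases of \eqref{eq:threshold_definition}. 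For the base case $k=T-1$, note $G_i^{T-1}(y_{ii})=\mathbb{E}[f(y_{ii}-\omega_i^{T-1})]$ is convex because $f$ is convex and expectation preserves convexity; differentiating under the expectation gives ${G_i^{T-1}}'=\mathbb{E}[f'(y_{ii}-\omega_i^{T-1})]=F_i^{T-1}$, and since $f'$ is nondecreasing so is $F_i^{T-1}$, so (a)--(d) all hold at $k=T-1$.

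For the inductive step, assume (a)--(d) hold at stage $k+1$ (together with the first-order identification $M_i^{k+1}=\tilde M_i^{k+1}$, i.e. $F_i^{k+1}(M_i^{k+1})=-c_i$, which I carry along in the induction). The crux is to analyze, for a fixed demand realization with $z:=y_{ii}-\omega_i^k$, the inner function
\[ h_{k+1}(z) := \big[c_i(M_i^{k+1}-z)+G_i^{k+1}(M_i^{k+1})\big]\mathds{1}\big(F_i^{k+1}(z)\le -c_i\big) + G_i^{k+1}(z)\,\mathds{1}\big(F_i^{k+1}(z)> -c_i\big), \]
so that $G_i^k(y_{ii})=\mathbb{E}[f(y_{ii}-\omega_i^k)+h_{k+1}(y_{ii}-\omega_i^k)]$. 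I would show $h_{k+1}$ is convex and differentiable with ${h_{k+1}}'(z)=-c_i\mathds{1}(F_i^{k+1}(z)\le -c_i)+F_i^{k+1}(z)\mathds{1}(F_i^{k+1}(z)> -c_i)$. Granting this, differentiating $G_i^k$ under the expectation---legitimate because $f'$, $F_i^{k+1}$, and hence ${h_{k+1}}'$ are uniformly bounded on the relevant range by IH (d)---reproduces exactly the recursion \eqref{eq:F} for $F_i^k$, which is claim (c); then (b), (d), (a) close out as in the base case.

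To establish convexity and differentiability of $h_{k+1}$, I would use monotonicity of $F_i^{k+1}$ (IH (d)) to observe that $\{z:F_i^{k+1}(z)\le -c_i\}$ is exactly the lower half-line $z\le \tilde M_i^{k+1}=M_i^{k+1}$. On that half-line $h_{k+1}$ is the affine map of slope $-c_i$; on the complement it equals $G_i^{k+1}$, which is convex with derivative $F_i^{k+1}$ by IH (b)--(c). The slope therefore increases from the constant $-c_i$ to the nondecreasing values $F_i^{k+1}(z)\ge -c_i$, so ${h_{k+1}}'$ is nondecreasing and $h_{k+1}$ is convex; the two pieces also join continuously in value since the affine piece evaluates to $G_i^{k+1}(M_i^{k+1})$ at $z=M_i^{k+1}$.

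The main obstacle is differentiability at the junction $z=M_i^{k+1}$: I must verify the left slope $-c_i$ equals the right slope $F_i^{k+1}(M_i^{k+1})$. This is precisely the assertion that $F_i^{k+1}$ attains the value $-c_i$ at its crossing point, which is clean when $\Omega_i$ has a continuous distribution (then $F_i^{k+1}$ is continuous) but can fail for the pmf model of the excerpt, where $F_i^{k+1}$ may jump across $-c_i$. In that case $h_{k+1}$ has a convex kink at $M_i^{k+1}$, convexity is unharmed, but `differentiable' in (b)--(c) must be read as holding off the kink set, with $F_i^k$ interpreted as the right-hand derivative matching the generalized inverse $\inf\{y:F_i^k(y)\ge -c_i\}$ in \eqref{eq:threshold_definition}. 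Settling this convention---or assuming a density so that everything is genuinely $C^1$---is the only delicate point; the remainder is bookkeeping of the piecewise-affine/convex structure and the bounded-convergence justification for differentiating under $\mathbb{E}$.
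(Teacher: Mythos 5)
Your proposal is correct and follows essentially the same route as the paper: a backward induction in which your inner function $h_{k+1}$ is exactly the next-stage cost-to-go $J_i^{k+1}$ of Theorem~\ref{theo:J}, so your identity $G_i^k(y_{ii})=\mathbb{E}[f(y_{ii}-\omega_i^k)+h_{k+1}(y_{ii}-\omega_i^k)]$ is the paper's key step \eqref{eq:G_t_proof}, and the derivative recursion, monotonicity, and generalized-inverse arguments for (a)--(d) match the paper's. The one substantive point where you go beyond the paper is your caveat about the junction $z=M_i^{k+1}$: the paper asserts differentiability and invokes the intermediate value theorem, which implicitly assumes a continuous demand distribution, whereas you correctly flag that under the pmf model $F_i^{k+1}$ can jump across $-c_i$, leaving a convex kink where claims (b)--(c) hold only off the kink set (or under a density assumption).
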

\begin{proof}
    See the Appendix.
\end{proof}
Note that, the sequences of mapping functions $\{F_i^k\}$ and $\{G_i^k\}$ are evaluated without any optimization algorithm. Given the density of the demand distributions $\omega_i^k$ at each firm, the computation of these functions can be done by numerical integration or Monte-Carlo simulation. We now define the optimal request and cost-to-go functions in terms of the functions defined in Lemma $\ref{lem:T_G}$.
\begin{theorem} \label{theo:J}
For $k=0,\dots,T-1$, the threshold in \eqref{eq:threshold} is defined as it is stated in \eqref{eq:threshold_definition}, having that the optimal request is
\begin{equation}\label{eq:threshold_policy}
    N_i^{k,\star} = \Big[\big(M_i^k-x_{ii}^k\big)_+ \Big]^{x_{ii}^U-x_{ii}^k}_{x_{ii}^L+D^k-x_{ii}^k} = (\tilde{M}_i^k-x_{ii}^k)_+,
\end{equation}
and the cost-to-function is 
\begin{equation} \label{eq:general_cost_to_go}
        J_i^k(x_{ii}^{k}) = \begin{cases} 
      c_i(\tilde{M}_i^{k}-x_{ii}^{k}) + G_i(\tilde{M}_i^{k})  &  \tilde{M}_i^{k}\geq x_{ii}^{k}  \\
      G_i(x_{ii}^{k}) &  \tilde{M}_i^{k}<x_{ii}^{k}.
      \end{cases}
    \end{equation}
\end{theorem}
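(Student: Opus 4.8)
The plan is to prove both claims simultaneously by backward induction on $k$ from $T-1$ down to $0$, carrying as the inductive hypothesis three coupled statements at stage $k+1$: that the unconstrained minimizer $M_i^{k+1}$ of the state-action cost $Q_i^{k+1}$ equals $\tilde{M}_i^{k+1}$ from \eqref{eq:threshold_definition}; that $J_i^{k+1}$ admits the closed form \eqref{eq:general_cost_to_go}; and the auxiliary identity $G_i^{k+1}(y)=Q_i^{k+1}(y)-c_i y=\mathbb{E}\big[f(y-\omega_i^{k+1})+J_i^{k+2}(y-\omega_i^{k+1})\big]$. The base case $k=T-1$ is immediate: substituting $J_i^{T}\equiv 0$ into \eqref{eq:state_cost} gives $Q_i^{T-1}(y)-c_i y=\mathbb{E}\big[f(y-\omega_i^{T-1})\big]=G_i^{T-1}(y)$, matching the first lines of \eqref{eq:F} and \eqref{eq:G}, and Lemma~\ref{lem:T_G}(c) then supplies ${G_i^{T-1}}^{\prime}=F_i^{T-1}$.

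The central step is to establish $G_i^k(y)=Q_i^k(y)-c_i y$ in the inductive step. I would substitute the closed form of $J_i^{k+1}$ into $\mathbb{E}\big[J_i^{k+1}(y-\omega_i^k)\big]$ and split the expectation according to whether the argument $y-\omega_i^k$ lies below or above $\tilde{M}_i^{k+1}$. Because $F_i^{k+1}$ is nondecreasing (Lemma~\ref{lem:T_G}(d)) and $\tilde{M}_i^{k+1}={F_i^{k+1}}^{-1}(-c_i)=\inf\{y:F_i^{k+1}(y)\ge -c_i\}$, the region $\{x\le \tilde{M}_i^{k+1}\}$ coincides with $\{F_i^{k+1}(x)\le -c_i\}$ except possibly on a flat region of $F_i^{k+1}$ at level $-c_i$, where the two branch formulas of $J_i^{k+1}$ are in any case equal (there $G_i^{k+1}$ is affine with slope $-c_i$, so $G_i^{k+1}(x)=c_i(\tilde{M}_i^{k+1}-x)+G_i^{k+1}(\tilde{M}_i^{k+1})$). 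Using this together with $M_i^{k+1}=\tilde{M}_i^{k+1}$, the two branches of $J_i^{k+1}$ map exactly onto the two indicator terms in the definition \eqref{eq:G} of $G_i^k$, yielding $G_i^k(y)=\mathbb{E}\big[f(y-\omega_i^k)+J_i^{k+1}(y-\omega_i^k)\big]=Q_i^k(y)-c_i y$.

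With this identity in hand the remaining claims follow cleanly. Since $Q_i^k(y)=G_i^k(y)+c_i y$, Lemma~\ref{lem:T_G}(c) gives $\frac{d}{dy}Q_i^k(y)=c_i+F_i^k(y)$, so the first-order condition for the unconstrained minimizer reads $F_i^k(M_i^k)=-c_i$; monotonicity and well-definedness of $\tilde{M}_i^k$ (Lemma~\ref{lem:T_G}(a),(d)) force $M_i^k=\tilde{M}_i^k$, with the two boundary branches of \eqref{eq:threshold_definition} corresponding precisely to $-c_i$ lying outside $[F_i^{k,\text{min}},F_i^{k,\text{max}}]=[F_i^k(x_{ii}^L+D_i^k),F_i^k(x_{ii}^U)]$, where the minimizer is pinned to an endpoint. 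Because \eqref{eq:threshold_definition} guarantees $\tilde{M}_i^k\in[x_{ii}^L+D_i^k,\,x_{ii}^U]$ in every case, one checks that feeding $\tilde{M}_i^k$ through the outer clip in \eqref{eq:threshold} leaves it unchanged and reproduces the feasibly-clipped $\max(M_i^k,x_{ii}^k)$ of Lemma~\ref{lem:dp}; the bracket therefore collapses to $N_i^{k,\star}=(\tilde{M}_i^k-x_{ii}^k)_+$, proving \eqref{eq:threshold_policy}.

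Finally, I would read off the cost-to-go from $J_i^k(x_{ii}^k)=Q_i^k(y_{ii}^{k,\star})-c_i x_{ii}^k$. When $\tilde{M}_i^k\ge x_{ii}^k$ the minimizer is $y_{ii}^{k,\star}=\tilde{M}_i^k$, and $Q_i^k(\tilde{M}_i^k)=c_i\tilde{M}_i^k+G_i^k(\tilde{M}_i^k)$ gives $J_i^k(x_{ii}^k)=c_i(\tilde{M}_i^k-x_{ii}^k)+G_i^k(\tilde{M}_i^k)$; when $\tilde{M}_i^k<x_{ii}^k$ the minimizer is $y_{ii}^{k,\star}=x_{ii}^k$ and the same identity gives $J_i^k(x_{ii}^k)=G_i^k(x_{ii}^k)$, matching \eqref{eq:general_cost_to_go} and closing the induction. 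I expect the main obstacle to be the bookkeeping in the central step, namely rigorously justifying the branch-to-indicator correspondence: the set equality $\{x\le\tilde{M}_i^{k+1}\}=\{F_i^{k+1}(x)\le -c_i\}$ holds only up to a flat level set of $F_i^{k+1}$, so one must verify that $J_i^{k+1}$ is continuous at its kink and that its two pieces agree on that set, ensuring the assembled expectation is insensitive to the boundary assignment; all differentiability needed is already delivered by Lemma~\ref{lem:T_G}(b),(c), so no separate interchange of differentiation and expectation is required.
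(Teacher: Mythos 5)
Your proof is correct and follows essentially the same route as the paper: a backward induction in which the key identity $G_i^k(y) = Q_i^k(y) - c_i y = \mathbb{E}\big[f_i(y-\omega_i^k) + J_i^{k+1}(y-\omega_i^k)\big]$ is established by matching the two branches of the closed-form $J_i^{k+1}$ to the indicators in \eqref{eq:G} via monotonicity of $F_i^{k+1}$, followed by the first-order condition $c_i + F_i^k = 0$ identifying the unconstrained minimizer with $\tilde{M}_i^k$ from \eqref{eq:threshold_definition}, and the clipping argument from Lemma~\ref{lem:dp}. The only substantive difference is one of care: your explicit treatment of a flat level set of $F_i^{k+1}$ at $-c_i$ (where both branches of $J_i^{k+1}$ agree because $G_i^{k+1}$ is affine with slope $-c_i$ there) patches a boundary subtlety that the paper's set-equality chain glosses over.
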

\begin{proof}
    See the Appendix.
\end{proof}
\begin{remark}[Optimal Requests in Multi-Echelon SN]
We have provided an analytical result to prove the existence of an optimal request policy of each individual node in the network and how it can be constructed using backward recursion. Under a network perspective, we are interested in how the policies of different nodes are connected to each other. To that end, we proceed to implement simulations based on the fact that a threshold policy is achievable for each firm and assess the network behavior when scenarios not considered in the problem formulation arise. 
\end{remark}

\section{Simulations} \label{sec:simulations}
In this section we implement the optimal ordering policies  
in code on the multi-echelon supply chain network visualized in Figure~\ref{fig:simulation_network}.
The network structure considered is loosely based on a physical supply chain for the production of a Kodak digital camera~\cite{graves2000optimizing}.
We first discuss how these simulations are implemented in code, then show the results of these simulations under ideal and non-ideal conditions to naively evaluate the policy robustness.
We explore the strengths and weaknesses of the policy and then explore its robustness by imposing supplier shortages and demand shifts.
One approximation used in our simulations is that we constraint $\pi(y_i^k)$ to output only integer values.

\begin{figure}
    \centering
    \includegraphics[width=.95\columnwidth]{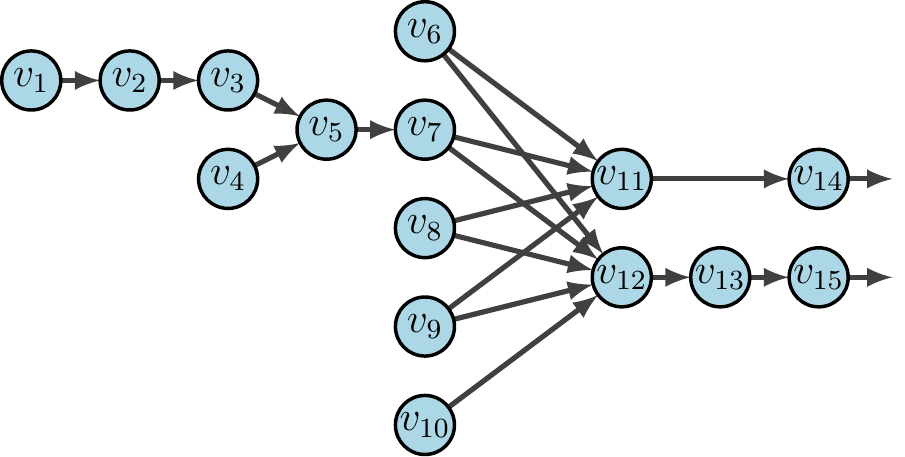}
    \caption{Multi-echelon supply chain network used in simulations containing two distributors $(v_{14},v_{15})$ and $13$ suppliers $(v_i, 0\leq i\leq 13)$.}
    \label{fig:simulation_network}
\end{figure}

\subsection{Simulation Implementation}

In the simulations, we include the 
production policy introduced in \eqref{eq:init_inv_dynamics} jointly with the optimal request developed in \eqref{eq:threshold_policy}. Also, we consider the scenario where the requests of each firm are not always what they are supplied back. To that end, instead of using the amount ordered, $N_i^k B_{ir}$, as the quantity of goods flowing between firms, we define a separate variable $\Tilde{u}_{ij}^k$, which represents
the amount of product $z_j$ delivered to $v_i$ at time $k$. 
As stated above, these two quantities $\Tilde{u}_{ij}^k$ and $N_i^k B_{ir}$ may not be equal.
Let $\Tilde{y}_{ir}^k = x_{ir}^k + \Tilde{u}_{ij}^k$ be the new variable for the on-hand stock of input good $z_r$ at firm $v_i$.
When the demand for a product $z_i$ exceeds the sum of the inventory and the production capacity 
(i.e., $\omega_i^k > N_i^k + x_{ii}^k$), following~ \eqref{eq:init_inv_dynamics}, 
the inventory of product $z_i$ would become negative.
This outcome does not make physical sense, and thus we 
modify~\eqref{eq:init_inv_dynamics} to become 
\begin{equation}\label{eq:inv_dynamics}
    x_{ir}^{k+1} = \begin{cases} 
      \max \left(0,x_{ii}^k + \pi(y_i^k) - \omega_i^k\right)  & r = i \\
    \Tilde{y}_{ir}^k - \pi(y_i^k)B_{ir} & r \neq i .
      \end{cases}
\end{equation}
Equation~\eqref{eq:inv_dynamics} is interpreted as follows: firm $v_i$ will ship out exactly as much good $z_i$ as is demanded, up to the maximum amount that they physically can.
We assume any demand not satisfied at a given time-step is lost to the firms in the considered network, as there is some other competitor who is able to absorb that excess demand. 
Additionally, in these situations each firm requires one
input good from each of the connected upstream firms for manufacturing their output good (i.e., each $B_i$ is a binary vector).

Each firm has a set of costs/penalties that subtract from profits:  
the production cost per unit $c_i$, the penalty for having a shortage $s_i$, and the holding cost $h_i$.
The values selected for the implementations in this section were randomly chosen, constrained by
\begin{align}
    &\sum_{j\in \mathcal{N}_i^+}s_j \leq c_i \leq s_i \label{eq:cost_constraint} \\
    &s_i \geq s_j,\ \ \forall j\in\mathcal{N}_i^+ \label{eq:shortage_constraint}.
\end{align}
Equation~\eqref{eq:cost_constraint} states that the cost to produce a good at firm $v_i$ should be between the sale price of $s_i$ and the purchase price of all incoming goods.
Recall that in these simulations, only one of each input good is used to manufacture the output.
Equation~\eqref{eq:shortage_constraint} states that the sale price of a good should be greater than or equal to the sale price of each input good, or that as goods are processed more and more, their value increases.

For these simulations, the production function used is 
\begin{equation}
    N_i^k = \min\left\lfloor \Tilde{y}_i^k \oslash B_i \right\rfloor,
\end{equation}
where $\Tilde y_i^k = \left[\Tilde y_{i1}^k \ \Tilde{y}_{i2}^k \ \cdots \  \Tilde y_{im}^k\right]^T$, $\oslash$ is the element-wise division operator, and $\left\lfloor\cdot\right\rfloor$ is the floor function.
The amount of product made at time $k$ will be the maximum possible using the on-hand stock of each product $\Tilde y_{ir}^k$, which is limited by the minimum whole number of products that could be created from each input good.

The initial state of each inventory in the network is a uniformly randomly sampled value below the maximum inventory levels,~$x_{ir}^0\in\left[0,x_{ir}^U\right]$.
In order to use 
\eqref{eq:F},\eqref{eq:threshold_definition} and \eqref{eq:threshold_policy} we must first know the demand distribution seen by each firm. 
Thus, we use a set of Monte Carlo simulations to map the demand from the suppliers' exogenous demand
backwards to the upstream firms, and approximate the distribution of the demand per stage at each supplier firm.
The expected demand distributions used by the Monte Carlo simulations for each distributor firm $v_{14}$ and $v_{15}$ are $\mathcal{N}(32,4)$.

\subsection{Ideal Simulations}

The first simulations followed all the assumptions made in Sections~\ref{sec:model} and~\ref{sec:optimization}. 
Plots for a sample of firms from one such simulation can be seen in Figure~\ref{fig:simulation_ideal}.
Notice that under ideal conditions, the output good inventory level never falls below the inventory policy minimum, which is the desired behavior.



\begin{figure}
    \centering
    \begin{overpic}[width=\columnwidth]{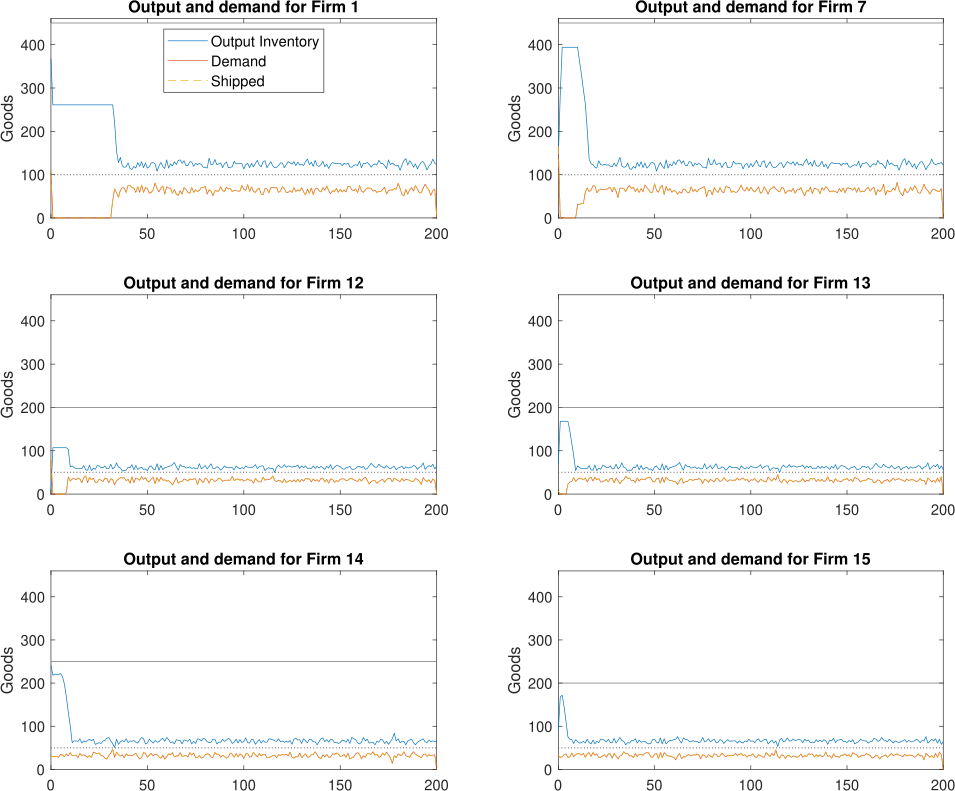}
    \put(25,-2.3){\scalebox{0.6}{$k$}}%
    \put(78,-2.3){\scalebox{0.6}{$k$}}%
    \put(25,26.5){\scalebox{0.6}{$k$}}%
    \put(78,26.5){\scalebox{0.6}{$k$}}%
    \put(25,55.5){\scalebox{0.6}{$k$}}%
    \put(78,55.5){\scalebox{0.6}{$k$}}%
    \end{overpic}
    \caption{Implementation of Section~\ref{sec:optimization} with demand distributions of $\mathcal{N}(32,4)$ for each distributor.}
    \label{fig:simulation_ideal}
\end{figure}

\subsection{Supplier Outages}
The first non-ideal case we simulate to check the 
robustness of \eqref{eq:threshold_policy} on the graph in Figure~\ref{fig:simulation_network} is when supplier $v_1$ has a total halt in production for $k\in[100,150]$, and the resulting plots can be see in Figure~\ref{fig:simulation_outage}.
In this scenario, 
several interesting behaviors arise. 
First, even though all orders are made and fulfilled in a single day, it takes time for nodes in further downstream nodes to ``feel" the effect of this outage. 
This delayed impact is because the output inventories of each firm act as a buffer, absorbing the impact until inventory reserves are empty.
The time it takes for a downstream firm to feel the impact of an outage at an upstream firm depends on the minimum inventory policies of every firm in the supply chain 
combined with how much demand there is.

Once the outage is resolved it takes time for each firm to fully recover, as can be seen in Figure~\ref{fig:simulation_outage}. 
Although the outage resolves at time $k=150$, the only firms that return to their ideal inventory levels immediately are the distributors (firms $v_{14}$ and $v_{15}$).
It takes around one additional time-step per echelon upstream from the distributors for the other firms to see ideal inventory levels again, which is because each firm essentially is refilling its buffer before resuming normal ordering behavior.

By these observations, it falls out that the firms producing goods directly for consumers end up experiencing the least impact from far upstream supply shocks. In fact, when the outage only went from $k=100$ to $k=130$, there were only a few time-steps where the distributors were unable to satisfy all their demand.

\subsection{Demand Shocks}
The second type of shocks induced on the network were demand quantities far greater than considered when 
\eqref{eq:threshold_policy} was computed.
The inventory levels of the network when demand was increased each time-step can be seen in Figure~\ref{fig:simulation_demand}.
The first observation to be made from looking at these plots is that the distributors are able to satisfy all the demand coming in until demand begins to exceed some threshold of around 100 units per day.  
Once the demand reaches this point, the quantity of orders filled is essentially constant until demand returns to a value below this threshold. The exact value of this threshold is determined by 
the optimal request being $N_i^{k,\star} = \tilde{M}_i^k$ according to \eqref{eq:threshold_policy}, given that the current inventory level is zero. Under this scenario, even though the threshold $\tilde{M}_i^k$ is the optimal request, it is not able to meet all the demand and 
ensure
\eqref{eq:DP_Algorithm_4} is satisfied.  This 
discrepancy occurs
mainly because at each time-step the interval $\mathcal{I}_i^k$ is increasing in length  
and this behavior
is not being considered in~\eqref{eq:DP_Algorithm_5}.
As can be seen in Figure~\ref{fig:simulation_demand}, 
once the shock ends (i.e., the interval $\mathcal{I}_i^k$ returns to its original 
length) the inventory level at each firm will 
recover and be able to satisfy
\eqref{eq:DP_Algorithm_4}.

When looking upstream from the distributors, notice that it is only the direct suppliers that experience demand that exceeds what they are capable supplying. Any firms further upstream only experience the demand directly communicated to them, essentially being unaffected by the mismatch in demand and production capacity at the end of the supply chain.
This behavior occurs because after enough mappings (i.e., after the demand is fed through the direct suppliers to distributors) the largest orders being made by each firm fall within the bounds of 
$\mathcal{I}_i^k$ for downstream firms.

\subsection{Robustness}
The purpose of the second and third simulation scenarios, where shocks were induced on the network, was to evaluate how an 
ordering policy that is determined to be optimal under ideal conditions developed in~\eqref{eq:threshold_policy} performs under real-world circumstances.
To quantify the relative performance of the network in each scenario, we look at the cumulative sum of penalties across the entire network.
Plots of these costs for each simulation 
can be seen in Figure~\ref{fig:simulation_costs}.
The key takeaway from this figure is that the ordering policy implemented is optimal, but not necessarily robust.
The policy could be made more robust by accounting for some probability of a shock occurring at each time-step, however that would result in 
higher costs for the 
 ideal scenario, as would be expected, illustrating the 
tradeoff between robustness and optimality.
While we have made this observation, delving into this tradeoff in more depth is outside the scope of this work and should be considered as  future work. 

\begin{figure}
    \centering
    \begin{overpic}[width=\columnwidth]{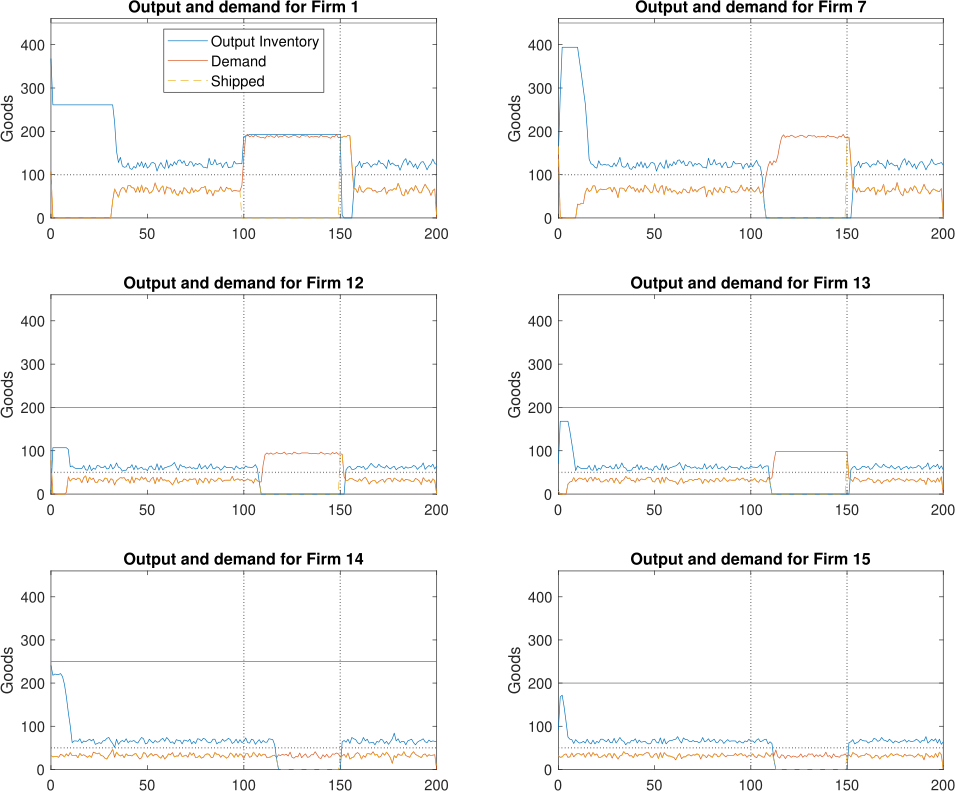}
    \put(25,-2.3){\scalebox{0.6}{$k$}}%
    \put(78,-2.3){\scalebox{0.6}{$k$}}%
    \put(25,26.5){\scalebox{0.6}{$k$}}%
    \put(78,26.5){\scalebox{0.6}{$k$}}%
    \put(25,55.5){\scalebox{0.6}{$k$}}%
    \put(78,55.5){\scalebox{0.6}{$k$}}%
    \end{overpic}
    \caption{Simulations of Figure~\ref{fig:simulation_network} with demand distributions of $\mathcal{N}(32,4)$ for each distributor, where supplier $v_1$ is unable to ship any of their output good from $k=100$ to $k=150$, which are marked by the vertical lines.}
    \label{fig:simulation_outage}
\end{figure}


\begin{figure}
    \centering
    \begin{overpic}[width=\columnwidth]{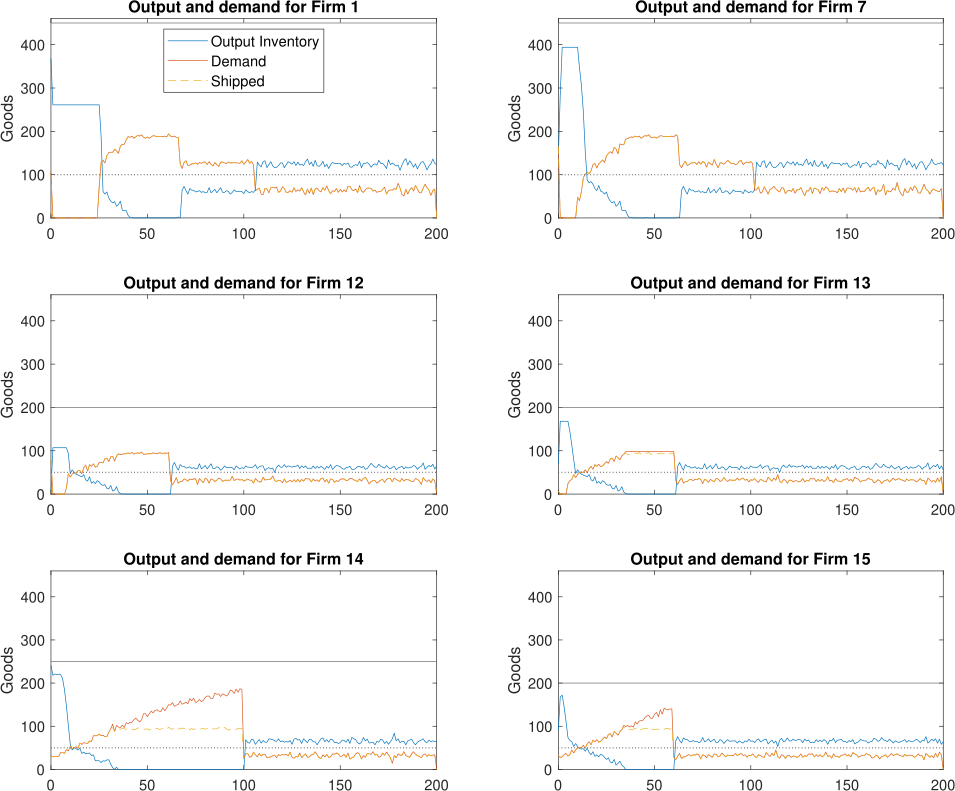}
    \put(25,-2.3){\scalebox{0.6}{$k$}}%
    \put(78,-2.3){\scalebox{0.6}{$k$}}%
    \put(25,26.5){\scalebox{0.6}{$k$}}%
    \put(78,26.5){\scalebox{0.6}{$k$}}%
    \put(25,55.5){\scalebox{0.6}{$k$}}%
    \put(78,55.5){\scalebox{0.6}{$k$}}%
    \end{overpic}
    \caption{Simulations on Figure~\ref{fig:simulation_network} with ideal demand distributions of $\mathcal{N}(32,4)$ for each distributor firm, where the demand mean increases by $2$ each time-step over $k\in[0,60]$ for both distributors $v_{14},v_{15}$, and then increases by $1$ over $k\in[61,100]$ for distributor $v_{14}$. 
    The demands shift back to the expected $\mathcal{N}(32,4)$ after these time-spans.
    }
    \label{fig:simulation_demand}
\end{figure}

\begin{figure}
    \centering
    \begin{overpic}[width=.5\columnwidth]{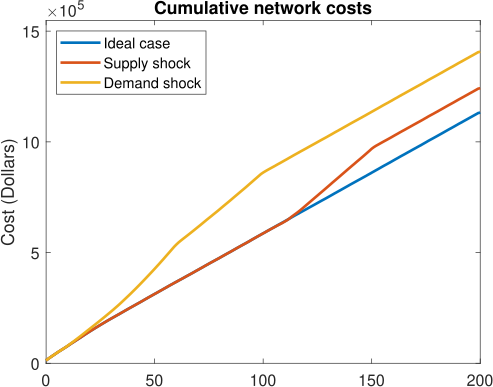}
    \put(52,-4.5){\scalebox{0.6}{$k$}}%
    \end{overpic}
    \caption{The cumulative aggregate costs for the entire network, for each of the three simulations plotted previously.
    }
    \label{fig:simulation_costs}
\end{figure}

\section{Conclusion} \label{sec:conclusion}

In this paper, we formulate the stochastic control problem of a multistage inventory management in a supply network. The structure of the problem is studied rigorously based on the development of a sequence of mapping functions that turn out to be critical to obtain a closed-form solution for the optimal request and cost-to-go function at each firm. In order to understand how our optimal request policies behave in a supply chain network we incorporate rudimentary production decisions into each firm. 

Through our simulations, we demonstrate that the policy developed herein performs as expected in ideal scenarios, where the conditions assumed in the analysis for the optimal requests hold. 
Further, we demonstrate that 
when there are internal shocks or shifts in exogenous demand on the system the whole network is able to recover its usual dynamics based on the policy proposed. We also analyze the impact on firms depending on their echelon in the SN and conclude that the production policy jointly with the optimal request provides a natural ``buffer" to each firm.
%
In future work we will develop analytical and numerical results for a joint production-inventory optimal policy, and investigate the possibility of including forecast updates on the production costs and the probability of supplier shocks to improve the robustness of the scheme proposed.

\bibliographystyle{IEEEtran}
\bibliography{refs}

\begin{thebibliography}{10}
\providecommand{\url}[1]{#1}
\csname url@rmstyle\endcsname
\providecommand{\newblock}{\relax}
\providecommand{\bibinfo}[2]{#2}
\providecommand\BIBentrySTDinterwordspacing{\spaceskip=0pt\relax}
\providecommand\BIBentryALTinterwordstretchfactor{4}
\providecommand\BIBentryALTinterwordspacing{\spaceskip=\fontdimen2\font plus
\BIBentryALTinterwordstretchfactor\fontdimen3\font minus
  \fontdimen4\font\relax}
\providecommand\BIBforeignlanguage[2]{{%
\expandafter\ifx\csname l@#1\endcsname\relax
\typeout{** WARNING: IEEEtran.bst: No hyphenation pattern has been}%
\typeout{** loaded for the language `#1'. Using the pattern for}%
\typeout{** the default language instead.}%
\else
\language=\csname l@#1\endcsname
\fi
#2}}

\bibitem{harris1913}
\BIBentryALTinterwordspacing
F.~W. Harris, ``How many parts to make at once,'' \emph{Operations Research},
  vol.~38, no.~6, pp. 947--950, 1990. [Online]. Available:
  \url{http://www.jstor.org/stable/170962}
\BIBentrySTDinterwordspacing

\bibitem{clark1960optimal}
A.~J. Clark and H.~Scarf, ``Optimal policies for a multi-echelon inventory
  problem,'' \emph{Management Science}, vol.~6, no.~4, pp. 475--490, 1960.

\bibitem{bushuev2015review}
M.~A. Bushuev, A.~Guiffrida, M.~Jaber, and M.~Khan, ``A review of inventory lot
  sizing review papers,'' \emph{Management Research Review}, 2015.

\bibitem{firoozi2018multi}
M.~Firoozi, ``Multi-echelon inventory optimization under supply and demand
  uncertainty,'' Ph.D. dissertation, Universit{\'e} de Bordeaux, 2018.

\bibitem{DEKOK2018955}
\BIBentryALTinterwordspacing
T.~{de Kok}, C.~Grob, M.~Laumanns, S.~Minner, J.~Rambau, and K.~Schade, ``A
  typology and literature review on stochastic multi-echelon inventory
  models,'' \emph{European Journal of Operational Research}, vol. 269, no.~3,
  pp. 955--983, 2018. [Online]. Available:
  \url{https://www.sciencedirect.com/science/article/pii/S0377221718301802}
\BIBentrySTDinterwordspacing

\bibitem{sun2020review}
J.~Sun, ``A review of multi-echelon inventory control in supply chain,''
  \emph{Open Journal of Business and Management}, vol.~8, no.~2, pp. 881--891,
  2020.

\bibitem{YEONGJOONYOO1997729}
\BIBentryALTinterwordspacing
{Yeong-joon Yoo}, {Won-seok Kim}, and {Jong-tae Rhee}, ``Efficient inventory
  management in multi-echelon distribution systems,'' \emph{Computers \&
  Industrial Engineering}, vol.~33, no.~3, pp. 729--732, 1997, selected Papers
  from the Proceedings of 1996 ICC\&IC. [Online]. Available:
  \url{https://www.sciencedirect.com/science/article/pii/S0360835297002337}
\BIBentrySTDinterwordspacing

\bibitem{HNAIEN2017223}
\BIBentryALTinterwordspacing
F.~Hnaien and H.~M. Afsar, ``Robust single-item lot-sizing problems with
  discrete-scenario lead time,'' \emph{International Journal of Production
  Economics}, vol. 185, pp. 223--229, 2017. [Online]. Available:
  \url{https://www.sciencedirect.com/science/article/pii/S0925527317300087}
\BIBentrySTDinterwordspacing

\bibitem{ZHOU20132039}
\BIBentryALTinterwordspacing
W.-Q. Zhou, L.~Chen, and H.-M. Ge, ``{A multi-product multi-echelon inventory
  control model with joint replenishment strategy},'' \emph{Applied
  Mathematical Modelling}, vol.~37, no.~4, pp. 2039--2050, 2013. [Online].
  Available:
  \url{https://www.sciencedirect.com/science/article/pii/S0307904X12002892}
\BIBentrySTDinterwordspacing

\bibitem{firouz2017integrated}
M.~Firouz, B.~B. Keskin, and S.~H. Melouk, ``An integrated supplier selection
  and inventory problem with multi-sourcing and lateral transshipments,''
  \emph{Omega}, vol.~70, pp. 77--93, 2017.

\bibitem{graves2000optimizing}
S.~C. Graves and S.~P. Willems, ``Optimizing strategic safety stock placement
  in supply chains,'' \emph{Manufacturing \& Service Operations Management},
  vol.~2, no.~1, pp. 68--83, 2000.

\bibitem{graves2008strategic}
------, ``Strategic inventory placement in supply chains: Nonstationary
  demand,'' \emph{Manufacturing \& Service Operations Management}, vol.~10,
  no.~2, pp. 278--287, 2008.

\bibitem{parker04twoechelon}
\BIBentryALTinterwordspacing
R.~P. Parker and R.~Kapuscinski, ``Optimal policies for a capacitated
  two-echelon inventory system,'' \emph{Operations Research}, vol.~52, no.~5,
  pp. 739--755, 2004. [Online]. Available:
  \url{http://www.jstor.org/stable/30036623}
\BIBentrySTDinterwordspacing

\bibitem{ERUGUZ2016110}
\BIBentryALTinterwordspacing
A.~S. Eruguz, E.~Sahin, Z.~Jemai, and Y.~Dallery, ``A comprehensive survey of
  guaranteed-service models for multi-echelon inventory optimization,''
  \emph{International Journal of Production Economics}, vol. 172, pp. 110--125,
  2016. [Online]. Available:
  \url{https://www.sciencedirect.com/science/article/pii/S0925527315005162}
\BIBentrySTDinterwordspacing

\bibitem{graves2016strategic}
\BIBentryALTinterwordspacing
S.~C. Graves and T.~Schoenmeyr, ``Strategic safety-stock placement in supply
  chains with capacity constraints,'' \emph{Manufacturing \& Service Operations
  Management}, vol.~18, no.~3, pp. 445--460, 2016. [Online]. Available:
  \url{https://doi.org/10.1287/msom.2016.0577}
\BIBentrySTDinterwordspacing

\bibitem{7040021}
J.~Qin and R.~Rajagopal, ``Price of uncertainty in multistage stochastic power
  dispatch,'' in \emph{53rd IEEE Conference on Decision and Control}, 2014, pp.
  4065--4070.

\end{thebibliography}

\appendix
\begin{proposition}\label{prop:convex_general}
Let $X$ be a nonempty set with $S_x$ a nonempty set for each $x \in X$. Let $C = \{(x,y):y\in S_x, x\in X\}$, let $V$ be a real-valued function on $C$ and define
\begin{align*}
    f(x) = \text{inf}\{J(x,y):y\in S_x\}, x\in X.
\end{align*}
If $C$ is a convex set and $J$ is a convex function on $C$, then $f$ is a convex function on any convex subset of $X$ where $f > -\infty$.
\end{proposition}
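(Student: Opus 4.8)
The plan is to verify the defining convexity inequality for $f$ directly, using $\epsilon$-approximate minimizers to circumvent the fact that the infimum defining $f$ need not be attained. Fix a convex subset $X_0 \subseteq X$ on which $f > -\infty$, pick $x_1, x_2 \in X_0$ and $\lambda \in (0,1)$, and write $x_\lambda = \lambda x_1 + (1-\lambda) x_2$; by convexity of $X_0$ we have $x_\lambda \in X_0$, so $f(x_\lambda) > -\infty$. The goal is to show $f(x_\lambda) \leq \lambda f(x_1) + (1-\lambda) f(x_2)$.

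First I would fix $\epsilon > 0$ and invoke the definition of the infimum to select $y_1 \in S_{x_1}$ and $y_2 \in S_{x_2}$ with $J(x_1, y_1) \leq f(x_1) + \epsilon$ and $J(x_2, y_2) \leq f(x_2) + \epsilon$; this is the step where finiteness of $f$ at $x_1$ and $x_2$ is used, since it guarantees these approximate minimizers exist with a finite bound. The second step uses convexity of the set $C$: since $(x_1, y_1) \in C$ and $(x_2, y_2) \in C$, the convex combination $(x_\lambda,\, \lambda y_1 + (1-\lambda) y_2)$ also lies in $C$, which is exactly the statement that $y_\lambda := \lambda y_1 + (1-\lambda) y_2 \in S_{x_\lambda}$. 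Hence $y_\lambda$ is a feasible point for the infimum defining $f(x_\lambda)$.

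The third step chains the estimates together. Because $y_\lambda \in S_{x_\lambda}$, the definition of $f$ gives $f(x_\lambda) \leq J(x_\lambda, y_\lambda)$; joint convexity of $J$ on $C$ gives $J(x_\lambda, y_\lambda) \leq \lambda J(x_1, y_1) + (1-\lambda) J(x_2, y_2)$; and the approximate-minimizer bounds give $\lambda J(x_1, y_1) + (1-\lambda) J(x_2, y_2) \leq \lambda f(x_1) + (1-\lambda) f(x_2) + \epsilon$. Combining these three inequalities yields $f(x_\lambda) \leq \lambda f(x_1) + (1-\lambda) f(x_2) + \epsilon$. Letting $\epsilon \downarrow 0$ removes the slack and establishes the convexity inequality, completing the argument.

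The main obstacle is really just the bookkeeping around the infimum: the infimum defining $f$ may fail to be attained, so one cannot simply pick exact minimizers $y_1, y_2$ and must instead carry the $\epsilon$ throughout and send it to zero only at the end. The hypothesis $f > -\infty$ on $X_0$ is precisely what makes the $\epsilon$-approximation meaningful, since if $f(x_1)$ or $f(x_2)$ were $-\infty$ the bound $J(x_i, y_i) \leq f(x_i) + \epsilon$ would be ill-posed. Everything else is a routine application of the definitions of infimum and joint convexity together with the convexity of $C$, which is what transports the feasible convex combination $y_\lambda$ into $S_{x_\lambda}$.
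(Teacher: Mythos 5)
Your proof is correct, and it is the standard partial-minimization argument: two independent $\epsilon$-approximate minimizers $y_1\in S_{x_1}$, $y_2\in S_{x_2}$, feasibility of $y_\lambda=\lambda y_1+(1-\lambda)y_2$ at $x_\lambda$ via convexity of $C$, joint convexity of $J$, then $\epsilon\downarrow 0$. It is worth noting how this differs from the paper's own proof, which attempts the same direct verification but with a \emph{single} variable $y$: the paper writes $f(\lambda x_1+(1-\lambda)x_2)=\inf\{J(\lambda x_1+(1-\lambda)x_2,y)\}\leq \inf\{\lambda J(x_1,y)+(1-\lambda)J(x_2,y)\}\leq \lambda\inf\{J(x_1,y)\}+(1-\lambda)\inf\{J(x_2,y)\}$. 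That chain is problematic in two ways your proof avoids. First, with a common $y$ the pointwise bound $J(\lambda x_1+(1-\lambda)x_2,y)\leq \lambda J(x_1,y)+(1-\lambda)J(x_2,y)$ requires $(x_1,y)$ and $(x_2,y)$ to both lie in $C$, i.e., $y\in S_{x_1}\cap S_{x_2}$, which need not hold for the $y$'s feasible at $x_\lambda$; your step using convexity of $C$ to place $(x_\lambda, y_\lambda)$ in $C$ is exactly the ingredient that repairs this, and indeed the paper never actually invokes convexity of $C$. Second, the paper's final inequality splits an infimum of a sum in the wrong direction: in general $\inf_y\{g(y)+h(y)\}\geq \inf_y g(y)+\inf_y h(y)$, not $\leq$. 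Decoupling into two variables $y_1,y_2$ (equivalently, your $\epsilon$-approximation) is what legitimately turns the bound into $\lambda f(x_1)+(1-\lambda)f(x_2)$. So your proposal is not merely equivalent bookkeeping; it is the rigorous version of the argument the paper gestures at, at the modest cost of carrying an explicit $\epsilon$.
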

\begin{proof}
Let $x_1$ and $x_2$ such that $f(x_1)>-\infty$, $f(x_2)>-\infty$. Pick $\alpha \in (0,1)$, and let $x = \alpha x_1 + (1-\alpha)x_2$. Now, by definition of the function $f$, we have that:
\begin{align*}
    f(\alpha x_1 + (1-\alpha)x_2) &= \underset{\alpha}{\inf}\big\{J(\alpha x_1 + (1-\alpha)x_2,y)\big\} \\
    &\leq \underset{\alpha}{\inf}\big\{\alpha J(x_1,y)+(1-\alpha)J(x_2,y)\big\} \\
    &\leq \underset{\alpha}{\inf}\big\{ J(x_1,y) \big\} + (1-\alpha)\inf\big\{J(x_2,y)\big\} \\
    \normalsize
    &= \alpha f(x_1) + (1-\alpha) f(x_2),
\end{align*}
with the first inequality due to the convexity of $J$ on $C$. The final equality establishes convexity of $f$.\\
\end{proof}
\textbf{Proof of Lemma 2:}
\\We will prove all the claims stated in Lemma \ref{lem:T_G}  by constructing the backward recursion.\\ 
\textbf{First iteration}: For stage $T-1$, $G_i^{T-1}(y_{ii})$ is convex since $J_i^{T-1}$ is convex based on  Proposition \ref{prop:convex_a} and it is differentiable such that
\begin{align*}
    {G_i^{T-1}}^\prime(y_{ii}) &= \pdv{\mathbb{E}[f_i(y_{ii}-\omega_i^{T-1})]}{y_{ii}} = \mathbb{E}[f_i^\prime(y_{ii}-\omega_i^{T-1})].
\end{align*}
Thus, we have that $(G_i^{T-1})^\prime = F_i^{T-1}$, where 
\begin{align*}
    F_i^{T-1}(y_{ii})&=-s_i\mathbb{P}(\omega_i^{T-1}>y_{ii}) + h_i\mathbb{P}(\omega_i^{T-1}\leq y_{ii})\\
    &= -s_i + (s_i + h_i)\mathbb{P}(\omega_i^{T-1}\leq y_{ii}).
\end{align*}
Note that $F_i^{T-1}(y_{ii})$ is nondecreasing with $F_i^{T-1,\text{max}}$ and $F_i^{T-1,\text{min}}$ defined as in Lemma \ref{lem:T_G}. Based on the intermediate value theorem, we know that $M_i^{T-1}$ exists and is well-defined whenever $F_i^{T-1,\text{min}}<-c_i<F_i^{T-1,\text{max}}$. Now, consider the optimization at stage $T-1$ 
\begin{align}\label{eq:J_T_1_app}
    -c_ix_{ii}^{T-1}+\underset{y_{ii}^{T-1}\geq x_{ii}^{T-1}}{\text{min}}\big\{c_iy_{ii}^{T-1}+\mathbb{E}\big[f\big(y_{ii}^{T-1}-\omega_i^{T-1}\big)\big]\big\}
\end{align}
in which the objective function is convex and its derivative with respect to $y_{ii}^{T-1}$ is $c_i+F_i^{T-1}(y_{ii})$. If $F_i^{T-1,\text{min}}>-c_i$, $c_i+F_i^{T-1}(y_{ii})$ is nondecreasing $\forall y_{ii} \in [\underline{a}_{ii},\overline{a}_{ii}]$ ; if $F_i^{T-1,\text{max}}<-c_i$, $c_i+F_i^{T-1}(y_{ii})$ is nonincreasing $\forall y_{ii} \in [\underline{a}_{ii},\overline{a}_{ii}]$; if $F_i^{T-1,\text{min}}<-c_i<F_i^{T-1,\text{max}}$, by the first-order necessary condition for a point to be optimal, we have that $\big[F_i^{T-1}\big]^{-1}(-c_i)$ is the unconstrained minimizer of \eqref{eq:J_T_1_app}. Since $y_{ii}^k$ is subjected to \eqref{eq:constraint_on-hand}, we can say that the feasible range of the mapping function will be in the interval $\big[F_i^{T-1}(x_{ii}^L+D_i^k),F_i^{T-1}(x_{ii}^U)\big]$ since $F_i^{T-1}$ is nondecreasing. Thus, we can summarize the unconstrained minimizer of \eqref{eq:J_T_1_app} in the form of \eqref{eq:threshold_definition}. By using Lemma \ref{lem:dp} and $y_{ii}^{T-1}=x_{ii}^{T-1}+N_i^{T-1}$ , the optimal request at stage $T-1$ is of the form $N_i^{T-1,\star}(x_{ii})=(\tilde{M}_i^{T-1}-x_{ii}^k)_+$ as stated in Theorem \ref{theo:J}. Then depending on whether $\tilde{M}_i^{T-1}-x_{ii}^k \geq 0$, $J_i^{T-1}$ has the form
\begin{multline}\label{eq:J_T-1_proof}
    J_i^{T-1}(y_{ii}^{T-1})=\\
\begin{cases}
c_i\big(\tilde{M}_i^{T-1}-x_{ii}^k\big)+G_i^{T-1}\big(\tilde{M}_i^{T-1}\big) & \text{if $\tilde{M}_i^{T-1}\geq x_{ii}^{T-1}$},\\
 G_i^{T-1}\big(x_{ii}^{T-1}\big) & \text{if $\tilde{M}_i^{T-1}<x_{ii}^{T-1}$.}
\end{cases}
\end{multline}
Note that based on the monotonicity of the function $F_i^{T-1}$ and the threshold $\tilde{M}_i^{T-1}$ previously defined, we have that:
\begin{align*}
    &\big\{y_{ii}-\omega_i^{T-2} \leq \tilde{M}_i^{T-1}\big\} \\
    &= \big\{y_{ii}-\omega_i^{T-2}\leq \big(T_i^{T-1}\big)^{-1}(-c_i)\big\}~\cup~\{-c_i<T_i^{T-1,\min}\}\\
    &= \{T_i^{T-1}(y_{ii}-\omega_i^{T-2})\leq -c_i\}~\cup~\{-c_i<T_i^{T-1,\min}\}\\
    &= \{T_i^{T-1}(y_{ii}-\omega_i^{T-2})\leq -c_i\}.
\end{align*}
From this last equation we can conclude that $y_{ii}$ in the feasible set $[\underline{a}_{ii},\overline{a}_{ii}]$,~$y_{ii}-\omega_i^{T-2}\leq\tilde{M}_i^{T-1}$ if and only if $T_i^{T-1}(y_{ii}-\omega_i^{k-1})\leq-c_i$.\\
\textbf{$\boldsymbol{k}$th Iteration:} Assume that all the claims hold for stage $k+1$. That is $J_i^{k+1}(y_{ii})$ has the form as in \eqref{eq:DP_Algorithm_2} and $\mathds{1}\big(T_i^{k+1}(y_{ii}-\omega_i^{k})\leq -c_i\big)=\mathds{1}\big(x_{ii}^{k+1}\leq\tilde{M}_i^{k+1}\big)$, we can check by definition of $G_i^k$ in \eqref{eq:G} that, 
\begin{equation}\label{eq:G_t_proof}
    G_i^k(y_{ii}) = \mathbb{E}\big[f_i(y_{ii}-\omega_i^k)+J_i^{k+1}(y_{ii}-\omega_i^k)\big]
\end{equation}
and it is convex by Proposition \ref{prop:convex_a}. Now, by computing the derivative of $G_i^{k+1}$, such that ${G_i^{k+1}}^\prime(y_{ii}) = T_i^{k+1}(y_{ii})$, we obtain
\begin{align*}
    {G_i^k }^\prime (y_{ii}) &= \mathbb{E} \bigg\{ f^\prime(y_{ii}-\omega_i^k)-c_i \mathds{1}\big(T_i^{k+1}(y_{ii}-\omega_i^k)\leq -c_i\big) \\
    &\quad +T_i^{k+1}(y_{ii}-\omega_i^k)\mathds{1}\big(T_i^{k+1}(y_{ii}-\omega_i^k)>-c_i\big)\bigg\},
\end{align*}
that is, ${G_i^k }^\prime=T_i^k$ and $T_i^k$ is nondecreasing. The rest of the arguments in Lemma \ref{lem:T_G} follow from the same arguments as in the base case.\\  
\\\textbf{Proof of Theorem \ref{theo:J}:}
\\By induction. For the base case, we have that the cost-to-go function is the one obtained in \eqref{eq:J_T-1_proof}. Then, consider the stage-$k$ optimization
\begin{align*}
    &\underset{N_i^k\geq 0}{\text{min}}\big\{c_iN_i^k+\mathbb{E}\big[f_i(x_{ii}^{k}+N_i^k-\omega_i^{k})+J_i^{k+1}(x_{ii}^k+N_i^k-\omega_i^k)\big]\big\} \\
    &= -c_ix_{ii}^{k}+\underset{y_{ii}^{k}\geq x_{ii}^{k}}{\text{min}}\big\{c_iy_{ii}^{k}+G_i^k(y_{ii}^k)\},
\end{align*}
where we follow the usual change of variable and \eqref{eq:G_t_proof}. Repeating the arguments as in the base case and following the proof of Lemma \eqref{lem:T_G} with $G_i^k(y_{ii}^k)$ in place of $\mathbb{E}[f_i(y_{ii}-\omega_i^k)] = G_i^{T-1}(y_{ii}^{T-1})$ concludes the proof.
\end{document}